\numberwithin{equation}{section}
\theoremstyle{plain}	     
\newtheorem{thm}{Theorem}[section] 
\newtheorem{lem}[thm]{Lemma}
\newtheorem{prop}[thm]{Proposition}
\theoremstyle{definition}
\theoremstyle{remark} 
\newtheorem{rem}[thm]{Remark}
\newcommand{\disp}{\displaystyle}
\begin{document}
\title{Complete $(p,q)$-elliptic integrals \\ with application to a family of means
\footnote{The work of S.\ Takeuchi 
was supported by JSPS KAKENHI Grant Number 24540218.}}
\author{Toshiki Kamiya and Shingo Takeuchi\footnote{Corresponding author} \\
Department of Mathematical Sciences\\
Shibaura Institute of Technology
\thanks{307 Fukasaku, Minuma-ku,
Saitama-shi, Saitama 337-8570, Japan. \endgraf
{\it E-mail address\/}: shingo@shibaura-it.ac.jp (S.\ Takeuchi) \endgraf
{\it 2010 Mathematics Subject Classification.} 
34L10, 33E05, 33C75}}
\date{}

\maketitle

\begin{abstract}
The complete elliptic integrals are generalized by 
using the generalized trigonometric functions with two parameters.
It is shown that a particular relation holds for the generalized integrals.
Moreover, as an application of the integrals, 
an alternative proof of a result  
for a family of means by Bhatia and Li, 
which involves the logarithmic mean and the 
arithmetic-geometric mean, is given. 
\end{abstract}

\textbf{Keywords:} 
Complete elliptic integrals,
Generalized trigonometric functions,
Arithmetic-Geometric mean, 
Logarithmic mean, 
Gaussian hypergeometric functions,
$p$-Laplacian.

%%%%%%%%%%%%%%%%%%%%%%%%%%%%%%%%%%%%%%%%%%%%%%%%%%

\section{Introduction}

In this paper, 
we deal with a \textit{complete $(p,q)$-elliptic integral 
of the first kind}
$$\mathcal{K}_{p,q}(k):=\int_0^{\frac{\pi_{p,q}}{2}}
\frac{d\theta}{(1-k^q\sin_{p,q}^{q}{\theta})^{1-\frac1p}}
=\int_0^1 \frac{dt}{(1-t^q)^\frac1p (1-k^qt^q)^{1-\frac1p}},$$
where $\sin_{p,q}{\theta}$ is the generalized $(p,q)$-trigonometric function
and $\pi_{p,q}$ denotes the half-period of $\sin_{p,q}{\theta}$.
The function $\sin_{p,q}{\theta}$ and the number $\pi_{p,q}$ 
play important roles to express the solutions $(\lambda,u)$ 
of inhomogeneous eigenvalue problem of $p$-Laplacian
$-(|u'|^{p-2}u')'=\lambda |u|^{q-2}u$
with a boundary condition.
See Section 2 for the definition of $\sin_{p,q}{\theta}$ and $\pi_{p,q}$; 
also \cite{DM,LE,T,T2} for details.
For $p=q=2$, it is easy to see that $\sin_{p,q}{\theta},\ \pi_{p,q}$ and
$\mathcal{K}_{p,q}(k)$ are identical to the classical $\sin{\theta},\ \pi$ and 
$\mathcal{K}(k)$, respectively, where $\mathcal{K}(k)$ is the complete elliptic 
integral of the first kind
$$\mathcal{K}(k):=\int_0^{\frac{\pi}{2}}
\frac{d\theta}{\sqrt{1-k^2\sin^2{\theta}}}
=\int_0^1 \frac{dt}{\sqrt{(1-t^2)(1-k^2t^2)}}.$$
Moreover, $\mathcal{K}_{p,q}(k)$ for $p=q$ has been already studied in \cite{T3}.

%
%The complete $(p,q)$-elliptic integral of the first kind
%$\mathcal{K}_{p,q}(k)$ was first defined in the 
%second author's \cite{T2}. This is slightly different from
%$$\mathcal{K}_{p,q}^*(k):=\int_0^{\frac{\pi_{p,q}}{2}}
%\frac{d\theta}{(1-k^q\sin_{p,q}^{q}{\theta})^{\frac1p}}
%=\int_0^1 \frac{dt}{(1-t^q)^\frac1p (1-k^qt^q)^{\frac1p}},$$
%which was introduced by \cite{T} (see also \cite{YH}).
%It is proved in \cite{T3} that these two integrals are related by 
%$$K_{p,p}(k)=\frac{1}{k'}K_{p,p}^*\left(i_{p,q} \frac{k}{k'}\right),$$
%where $k':=(1-k^p)^\frac1p$ and $i_{p,q}:=e^{\frac{i\pi}{p}}$.
%The second kind $E_{p,p}(k)$ has also a counterpart, 
%see \cite[Corollary 2.4]{T3}. 

%We will give another application. 
%Moreover, we will define the 
%\textit{complete $(p,q)$-elliptic integral of the second kind} as 
%$$E_{p,q}(k):=\int_0^{\frac{\pi_{p,q}}{2}}
%(1-k^q\sin_{p,q}^q{\theta})^{\frac1p}\,d\theta,$$
%which satisfies
%$$E_{p,q}(k)
%=\frac{\pi_{p,q}}{2} F\left(-\frac1p,\frac1q;\frac{1}{p^*}+\frac1q;k^q\right).$$
%particularly
%$$E_{p^*p}(k)
%=\frac{\pi_{p^*p}}{2} F\left(-\frac{1}{p^*},\frac1p;\frac2p;k^p\right).$$

In this paper 
we will apply the complete $(p,q)$-elliptic integral $\mathcal{K}_{p,q}(k)$ to study 
a family of means defined by Bhatia and Li \cite{BL}
and to give an alternative proof of their theorem.

For a while, we will describe a part of the study in \cite{BL}.
Let $a$ and $b$ be positive numbers. 
The \textit{logarithmic mean} $\mathrm{L}(a,b)$ of $a$ and $b$
is defined by
$$\mathrm{L}(a,b):=
\begin{cases}
\dfrac{a-b}{\log{a}-\log{b}} & (a \neq b),\\
a & (a=b).
\end{cases}
$$
The \textit{arithmetic-geometric mean} $\mathrm{AG}(a,b)$
of $a$ and $b$ is defined as follows:
Let us consider the sequences $\{a_n\}$ and $\{b_n\}$ satisfying
$$a_{n+1}=\frac{a_n+b_n}{2}, \quad b_{n+1}=\sqrt{a_nb_n}, \quad n=0,1,2,\ldots$$
with $a_0=a$ and $b_0=b$. The sequences 
$\{a_n\}$ and $\{b_n\}$ converge to a common limit, and
$$\mathrm{AG}(a,b):=\lim_{n \to \infty}a_n=\lim_{n \to \infty}b_n.$$

It is known that $\mathrm{L}(a,b)$ and $\mathrm{AG}(a,b)$ 
have integral expressions as
\begin{gather*}
\frac{1}{\mathrm{L}(a,b)}=\int_0^\infty \frac{dt}{(t+a)(t+b)},\\
\frac{1}{\mathrm{AG}(a,b)}=\frac{2}{\pi} \int_0^\infty
\frac{dt}{\sqrt{(t^2+a^2)(t^2+b^2)}}.
\end{gather*}
Indeed, the first one follows from direct calculation of the right-hand side 
and the second one is a celebrated result of Gauss 
(e.g., \cite[Theorem 3.2.3]{AAR} or \cite[Theorem 1.1]{BB3} with 
setting $b\tan{\theta}=t$). 

Motivated by these expressions, 
Bhatia and Li introduced an interpolating family of means $\mathrm{M}_p(a,b)$ by
\begin{equation*}
%\label{eq:Mp}
\frac{1}{\mathrm{M}_p(a,b)}:=c_p \int_0^\infty
\frac{dt}{((t^p+a^p)(t^p+b^p))^{\frac{1}{p}}},\quad p \in (0,\infty),
\end{equation*}
where $c_p$ is defined to satisfy $\mathrm{M}_p(a,a)=a$, hence,
$$\frac{1}{c_p}:=\int_0^\infty \frac{dt}{(1+t^p)^{\frac{2}{p}}}.$$
Moreover, $\mathrm{M}_0$ is defined by taking limit:
$$\mathrm{M}_0(a,b)=\lim_{p \to +0}\mathrm{M}_p(a,b)=\sqrt{ab}.$$
Clealy, $\mathrm{M}_1(a,b)=\mathrm{L}(a,b)$ and 
$\mathrm{M}_2(a,b)=\mathrm{AG}(a,b)$, thus
$\mathrm{M}_p(a,b)$ is a generalization of $\mathrm{L}(a,b)$ and $\mathrm{AG}(a,b)$.
It is easily seen that 
$\mathrm{M}_p(a,b)$ is a \textit{binary symmetric mean} of positive numbers $a$ and $b$,
that is 
\begin{enumerate}
\item $\min\{a,b\} \leq \mathrm{M}_p(a,b) \leq \max\{a,b\}$
\item $\mathrm{M}_p(a,b)=\mathrm{M}_p(b,a)$
\item $\mathrm{M}_p(\alpha a,\alpha b)=\alpha \mathrm{M}_p(a,b)$ for all $\alpha>0$
\item $\mathrm{M}_p(a,b)$ is non-decreasing in $a$ and $b$.
\end{enumerate} 

They studied relation between $\mathrm{M}_p(a,b)$ and $\mathrm{K}_p(a,b)$,
the \textit{power difference mean} of $a$ and $b$.
This is defined for any $p \in \mathbb{R}$ and $a,\ b>0$ by
$$\mathrm{K}_p(a,b):=
\begin{cases}
\dfrac{p-1}{p}\dfrac{a^p-b^p}{a^{p-1}-b^{p-1}} & (a \neq b),\\
a & (a=b),
\end{cases}
$$
where it is understood that
\begin{align*}
\mathrm{K}_0(a,b)&:=\lim_{p \to 0}\mathrm{K}_p(a,b)=\frac{ab}{\mathrm{L}(a,b)},\\
\mathrm{K}_1(a,b)&:=\lim_{p \to 1}\mathrm{K}_p(a,b)=\mathrm{L}(a,b).
\end{align*}
For more details of $\mathrm{K}_p(a,b)$, 
see \cite{BL,Na} and the references given there.

These two means are related in the following sense.
It is easy to check 
that $1/\mathrm{M}_p(a,b)$ can be written as $(t^p+a^p=a^ps^{-1})$
\begin{equation*}
%\label{eq:BLmp}
\frac{1}{\mathrm{M}_p(a,b)}=
\frac{\disp \int_0^1\frac{s^{\frac1p-1}(1-s)^{\frac1p-1}}{(a^p(1-s)+b^ps)^\frac1p}\,ds}
{\disp \int_0^1s^{\frac1p-1}(1-s)^{\frac1p-1}\,ds}
\end{equation*}
and $1/\mathrm{K}_p(a,b)$ also admits the following integral expression:
\begin{equation}
\label{eq:BLkp}
\frac{1}{\mathrm{K}_p(a,b)}=\int_0^1 \frac{ds}{(a^p(1-s)+b^ps)^{\frac1p}}.
\end{equation}
Thus $1/\mathrm{M}_p(a,b)$ and $1/\mathrm{K}_p(a,b)$ are
the weighted mean with the beta distribution
and with the continuous uniform distribution of
the function $(a^p(1-s)+b^ps)^{-1/p}$, respectively.

They concluded the following theorem
with easy but technical calculation. 
But, it is hard to say that these calculations are natural.

\begin{thm}[\cite{BL}]
\label{thm:BL4}
Given $a,\ b>0$ and $a \neq b$, we have
\begin{enumerate}
\item $\mathrm{M}_p(a,b)>\mathrm{K}_p(a,b)$ if $0 \leq p<1$
\item $\mathrm{M}_1(a,b)=\mathrm{K}_1(a,b)$
\item $\mathrm{M}_p(a,b)<\mathrm{K}_p(a,b)$ if $p>1$. 
\end{enumerate}
\end{thm}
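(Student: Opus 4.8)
The plan is to reduce both means to functions of a single variable, recognise them as a complete $(p,q)$-elliptic integral and an elementary integral (equivalently, two Gaussian hypergeometric functions), and then compare their power series coefficient by coefficient.

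First I would use properties (ii) and (iii) to assume $a=1>b>0$, and put $\kappa:=1-b^{p}\in(0,1)$, $k:=\kappa^{1/p}$. With $a=1$ the linear factor $a^{p}(1-s)+b^{p}s$ equals $1-\kappa s$, so the two integral expressions displayed above (the beta-distribution one for $1/\mathrm{M}_p$ and \eqref{eq:BLkp} for $1/\mathrm{K}_p$) become, by Euler's integral for ${}_2F_1$,
\[
\frac{1}{\mathrm{M}_p(1,b)}=\frac{1}{B\!\left(\tfrac1p,\tfrac1p\right)}\int_{0}^{1}\frac{s^{\frac1p-1}(1-s)^{\frac1p-1}}{(1-\kappa s)^{\frac1p}}\,ds={}_2F_1\!\left(\tfrac1p,\tfrac1p;\tfrac2p;\kappa\right),\qquad
\frac{1}{\mathrm{K}_p(1,b)}=\int_{0}^{1}\frac{ds}{(1-\kappa s)^{\frac1p}}={}_2F_1\!\left(\tfrac1p,1;2;\kappa\right).
\]
In the first one, the substitution $s=t^{p}$ further rewrites the integral as $\tfrac{p}{B(1/p,1/p)}\,\mathcal{K}_{p^{*},p}(k)$ with $p^{*}=p/(p-1)$ when $p>1$, which is the $(p,q)$-analogue of Gauss's $1/\mathrm{AG}(1,k')=\tfrac2\pi\mathcal{K}(k)$; for $0<p<1$ one simply keeps the Euler/hypergeometric form.

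Next I would compare the two series. The theorem reduces to fixing the sign, for $\kappa\in(0,1)$, of
\[
F(\kappa):=\frac{1}{\mathrm{M}_p(1,b)}-\frac{1}{\mathrm{K}_p(1,b)}=\sum_{n\ge0}c_n\kappa^{n},\qquad
c_n=\frac{(1/p)_n}{(n+1)!}\bigl(R_n-1\bigr),\quad
R_n:=(n+1)\prod_{j=0}^{n-1}\frac{\tfrac1p+j}{\tfrac2p+j},
\]
where the $c_n$ come from subtracting the coefficients of $\kappa^n$ in the two hypergeometric series. One checks $R_0=R_1=1$, hence $c_0=c_1=0$. For $n\ge2$, regarding $R_n$ as a function of $x:=1/p\in(0,\infty)$, we have $R_n(1)=1$ because $\prod_{j=0}^{n-1}\tfrac{1+j}{2+j}=\tfrac1{n+1}$, and
\[
\frac{d}{dx}\log R_n(x)=\sum_{j=0}^{n-1}\left(\frac{1}{x+j}-\frac{2}{2x+j}\right)=-\sum_{j=1}^{n-1}\frac{j}{(x+j)(2x+j)}<0 ,
\]
so $R_n(x)>1$ for $0<x<1$ and $R_n(x)<1$ for $x>1$. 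Since $(1/p)_n/(n+1)!>0$ and $\kappa^{n}>0$, every term of $F(\kappa)$ is $\ge0$ when $p>1$ (with $c_2>0$), $\le0$ when $0<p<1$ (with $c_2<0$), and all vanish at $p=1$; therefore $F(\kappa)>0$, $=0$, $<0$ respectively, which is (iii), (ii), (i) for $p>0$. The endpoint $p=0$ then follows by letting $p\to+0$ (or directly: $\mathrm{M}_0=\sqrt{ab}>ab/\mathrm{L}(a,b)=\mathrm{K}_0$ is the classical inequality $\mathrm{L}(a,b)>\sqrt{ab}$).

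The hard part will be exactly the step that forces \emph{every} coefficient $c_n$ to have one sign, i.e. the two facts $R_n(1)=1$ and $\tfrac{d}{dx}\log R_n<0$ for $n\ge2$; these also explain why the direction of the inequality switches precisely at $p=1$. Everything else is substitution bookkeeping plus the harmless $p=0$ limit. If one wanted to bypass the series there is a parallel, more conceptual route: $1/\mathrm{M}_p$ and $1/\mathrm{K}_p$ are the averages of the convex function $s\mapsto\bigl(a^{p}(1-s)+b^{p}s\bigr)^{-1/p}$ against, respectively, the $\mathrm{Beta}(1/p,1/p)$ density and the uniform density on $[0,1]$; these two symmetric densities — peaked at $\tfrac12$ when $0<p<1$, U-shaped when $p>1$ — are comparable in the convex order by a two-crossing argument, and Jensen's inequality then gives the strict inequalities. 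I would nonetheless present the hypergeometric version as the main proof, since it keeps the role of $\mathcal{K}_{p,q}$ explicit.
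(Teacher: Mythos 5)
Your proof is correct. It starts exactly where the paper does --- reducing to $a=1$, $b=x\in(0,1)$ and identifying $1/\mathrm{M}_p$ and $1/\mathrm{K}_p$ with the hypergeometric functions $F(\tfrac1p,\tfrac1p;\tfrac2p;\kappa)$ and $F(1,\tfrac1p;2;\kappa)$ of \eqref{eq:hypergeometric} and \eqref{eq:hypergeometric2} --- but then takes a genuinely different route for the decisive comparison. The paper applies the quadratic transformation of Lemma \ref{lem:aar} to both functions, arriving at \eqref{eq:mp} and \eqref{eq:kp}, which share the same first two parameters and the same argument and differ only in the third parameter ($\tfrac1p+\tfrac12$ versus $\tfrac32$); since $(c)_n$ is increasing in $c>0$, the term-by-term comparison is then immediate and the switch at $p=1$ is read off from $\tfrac1p+\tfrac12\gtrless\tfrac32$. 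You instead compare the two untransformed series coefficient by coefficient, which requires real work: the identities $R_0=R_1=1$, $R_n(1)=1$, and the logarithmic-derivative computation $\tfrac{d}{dx}\log R_n(x)=-\sum_{j=1}^{n-1}\tfrac{j}{(x+j)(2x+j)}<0$ for $n\ge2$. I checked these; they are correct, and together with $(1/p)_n/(n+1)!>0$, $c_2\neq0$ for $p\neq1$, and absolute convergence for $\kappa\in(0,1)$ they do give the strict inequalities and the equality at $p=1$. Your handling of $p=0$ via $\mathrm{L}(a,b)>\sqrt{ab}$ is also fine (the paper leaves that case to the reader). What each approach buys: yours is self-contained, needing no transformation formula, but is closer in spirit to the ``easy but technical'' calculation of Bhatia and Li that the paper explicitly set out to replace; the paper's version outsources the difficulty to the classical quadratic transformation and makes the sign change at $p=1$ visible in a single parameter. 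Your closing remark that the two means are averages of the same convex integrand against the $\mathrm{Beta}(1/p,1/p)$ and uniform densities is an attractive heuristic, but as written it is only a sketch (the two-crossing/convex-order argument is not carried out), so it should be presented as a remark rather than as an alternative proof.
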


%They show Theorem \ref{thm:BL4} as follows.
%Let $a,\ b>0$ and $0<p<\infty$.
%Making use of identity
%$$(t^p+a^p)(t^p+b^p)=\left(t^p+\frac{a^p+b^p}{2}\right)^2(1-r^2)$$
%%,\quad r:=\frac{a^p-b^p}{2} \left(t^p+\frac{a^p+b^p}{2}\right)^{-1},$$
%for some $r$, they give another expression of $\mathrm{M}_p(a,b)$ as
%\begin{equation}
%\label{eq:BL2}
%\frac{1}{\mathrm{M}_p(a,b)}=\left(\frac{a^p+b^p}{2}\right)^{-\frac1p}
%\sum_{k=0}^\infty \frac{1}{k!}
%\left[\prod_{i=0}^{k-1}\left(\frac1p+i\right)\right]
%\left[\prod_{i=0}^{2k-1} \frac{\frac1p+i}{\frac2p+i}\right]
%\left(\frac{a^p-b^p}{a^p+b^p}\right)^{2k},
%\end{equation}
%by convention, $\prod_{i=0}^{-1}(\cdots) \equiv 1$ and $0!=1$.
%Similarly, it is proved that $\mathrm{K}_p(a,b)$ satisfies
%\begin{equation}
%\label{eq:BL3}
%\frac{1}{\mathrm{K}_p(a,b)}=\left(\frac{a^p+b^p}{2}\right)^{-\frac1p}
%\sum_{k=0}^\infty \frac{1}{(2k+1)!}
%\left[\prod_{i=0}^{2k-1} \left(\frac1p+i\right) \right]
%\left(\frac{a^p-b^p}{a^p+b^p}\right)^{2k}.
%\end{equation}
%Comparing the coefficients of $(\frac{a^p-b^p}{a^p+b^p})^{2k}$
%in \eqref{eq:BL2} and \eqref{eq:BL3},

In this paper, we will give an alternative proof of Theorem \ref{thm:BL4}.
Using the complete $(p,q)$-elliptic integral, 
we can easily give a hypergeometric representation
\eqref{eq:hypergeometric} in Theorem \ref{thm:main} below for $1/\mathrm{M}_p(a,b)$. 
Applying a formula 
of hypergeometric function to \eqref{eq:hypergeometric} and \eqref{eq:hypergeometric2},
we have \eqref{eq:mp} and \eqref{eq:kp}.
%which are equivalent to \eqref{eq:BL2} and \eqref{eq:BL3},
%respectively.
We emphasize that Theorem \ref{thm:BL4} of Bhatia and Li 
follows immediately from Theorem \ref{thm:main} with comparing only 
the third parameters of \eqref{eq:mp} and \eqref{eq:kp}.
%Note that by the properties (ii) and (iii) of 
%binary symmetric means, we may assume
%that $a=1$ and $b=x \in (0,1]$. 
\begin{thm}
\label{thm:main}
Let $p \in (0,\infty),\ p \neq 1$ and $x \in (0,1]$. Then
\begin{align}
\frac{1}{\mathrm{M}_p(1,x)}
\notag
&=\frac{2}{\pi_{p^*,p}}\mathcal{K}_{p^*,p}((1-x^p)^\frac1p)\\
\label{eq:hypergeometric}
&=F\left(\frac1p,\frac1p;\frac{2}{p};1-x^p\right),\\
\label{eq:mp}
&=\left(\frac{1+x^p}{2}\right)^{-\frac1p}
F\left(\frac{1}{2p},\frac{1}{2p}+\frac{1}{2};\frac1p+\frac12;
\left(\frac{1-x^p}{1+x^p}\right)^2\right),\\
\frac{1}{\mathrm{K}_p(1,x)}
\label{eq:hypergeometric2}
&=F\left(1,\frac1p;2;1-x^p\right),\\
\label{eq:kp}
&=\left(\frac{1+x^p}{2}\right)^{-\frac1p}
F\left(\frac{1}{2p},\frac{1}{2p}+\frac{1}{2};\frac32;
\left(\frac{1-x^p}{1+x^p}\right)^2\right).
\end{align}
Therefore, Theorem $\ref{thm:BL4}$ immediately follows.
\end{thm}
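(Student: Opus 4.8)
The plan is to reduce both $1/\mathrm{M}_p(1,x)$ and $1/\mathrm{K}_p(1,x)$ to Euler's integral representation of the Gauss hypergeometric function,
\[
F(a,b;c;z)=\frac{1}{B(b,c-b)}\int_0^1 s^{b-1}(1-s)^{c-b-1}(1-zs)^{-a}\,ds\qquad(c>b>0),
\]
and then to apply a single quadratic transformation. By symmetry and homogeneity of both families of means it suffices to treat the normalization $(1,x)$ with $x\in(0,1]$. Using the substitution $t^p+1=s^{-1}$ indicated in the introduction---equivalently, starting from the Beta-weighted integral for $1/\mathrm{M}_p(a,b)$ recalled there---one gets
\[
\frac{1}{\mathrm{M}_p(1,x)}=\frac{1}{B\!\left(\tfrac1p,\tfrac1p\right)}\int_0^1 s^{\frac1p-1}(1-s)^{\frac1p-1}\bigl(1-(1-x^p)s\bigr)^{-\frac1p}\,ds ,
\]
which is Euler's integral with $a=b=\tfrac1p$, $c=\tfrac2p$, $z=1-x^p$; this is \eqref{eq:hypergeometric}. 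For the first line of the chain, substitute $t^p=s$ in $\mathcal{K}_{p^*,p}(k)=\int_0^1(1-t^p)^{-1/p^*}(1-k^pt^p)^{-(1-1/p^*)}\,dt$ and use $1/p^*=1-1/p$ to obtain $\mathcal{K}_{p^*,p}(k)=\tfrac1p\int_0^1 s^{\frac1p-1}(1-s)^{\frac1p-1}(1-k^ps)^{-\frac1p}\,ds$; in particular $\mathcal{K}_{p^*,p}(0)=\tfrac1p B(\tfrac1p,\tfrac1p)=\tfrac{\pi_{p^*,p}}{2}$, so dividing $\mathcal{K}_{p^*,p}\bigl((1-x^p)^{1/p}\bigr)$ by this value recovers $F(\tfrac1p,\tfrac1p;\tfrac2p;1-x^p)=1/\mathrm{M}_p(1,x)$.

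For $\mathrm{K}_p$, start from \eqref{eq:BLkp} with $a=1$, $b=x$: this reads $1/\mathrm{K}_p(1,x)=\int_0^1\bigl(1-(1-x^p)s\bigr)^{-1/p}\,ds$, which is Euler's integral with $a=\tfrac1p$, $b=1$, $c=2$, $z=1-x^p$, so by the symmetry $F(a,b;c;z)=F(b,a;c;z)$ we get \eqref{eq:hypergeometric2}. The passage from \eqref{eq:hypergeometric} to \eqref{eq:mp} and from \eqref{eq:hypergeometric2} to \eqref{eq:kp} is then one application of the quadratic transformation
\[
F(a,b;2b;z)=\left(1-\frac z2\right)^{-a}F\!\left(\tfrac a2,\ \tfrac a2+\tfrac12;\ b+\tfrac12;\ \left(\frac{z}{2-z}\right)^2\right),
\]
together with the identities $1-\tfrac z2=\tfrac{1+x^p}{2}$ and $\tfrac{z}{2-z}=\tfrac{1-x^p}{1+x^p}$ valid for $z=1-x^p$: taking $a=b=\tfrac1p$ (so $2b=\tfrac2p$) gives \eqref{eq:mp}, and taking $a=\tfrac1p$, $b=1$ (so $2b=2$) gives \eqref{eq:kp}. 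In both cases the first two hypergeometric parameters become $\tfrac1{2p}$ and $\tfrac1{2p}+\tfrac12$, whereas the third parameter is $\tfrac1p+\tfrac12$ for $\mathrm{M}_p$ and $\tfrac32$ for $\mathrm{K}_p$.

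Theorem \ref{thm:BL4} then follows at once by comparing \eqref{eq:mp} and \eqref{eq:kp}. Their right-hand sides share the positive prefactor $\bigl(\tfrac{1+x^p}{2}\bigr)^{-1/p}$, the first two parameters $\tfrac1{2p}$ and $\tfrac1{2p}+\tfrac12$, and the argument $w:=\bigl(\tfrac{1-x^p}{1+x^p}\bigr)^2\in(0,1)$ for $x\in(0,1)$; only the third parameter differs. Since for $p>0$ all Pochhammer symbols occurring are positive and $\gamma\mapsto(\gamma)_n$ is strictly increasing on $(0,\infty)$ for every $n\ge1$, the series $F\!\left(\tfrac1{2p},\tfrac1{2p}+\tfrac12;\gamma;w\right)$ is strictly decreasing in $\gamma>0$. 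Because $\tfrac1p+\tfrac12>\tfrac32$ exactly when $p<1$, this gives $1/\mathrm{M}_p(1,x)<1/\mathrm{K}_p(1,x)$, i.e.\ $\mathrm{M}_p(1,x)>\mathrm{K}_p(1,x)$, for $0<p<1$, and the reverse strict inequality for $p>1$---these are cases (i) and (iii); case (ii) is the identity $\mathrm{M}_1=\mathrm{L}=\mathrm{K}_1$ recalled in the introduction, and the endpoint $p=0$ in (i) follows by continuity together with $\mathrm{L}(a,b)>\sqrt{ab}$. I expect the only genuine obstacle to be locating the correct quadratic transformation and matching its parameters precisely; the substitutions and the term-by-term monotonicity in the third parameter are routine, the one point requiring care being the bookkeeping of the conjugate exponent $p^*$ in the $(p,q)$-elliptic integral.
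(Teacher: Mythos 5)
Your argument is correct, and the endgame (the quadratic transformation and the comparison of third parameters) coincides with the paper's; the difference lies in how you reach \eqref{eq:hypergeometric} and \eqref{eq:hypergeometric2}. The paper substitutes $t=x\tan_{p^*,p}\theta$ to identify $1/\mathrm{M}_p(1,x)$ with $c_p\,\mathcal{K}_{p^*,p}((1-x^p)^{1/p})$ and then expands $(1-k^q\sin_{p,q}^q\theta)^{-1/p^*}$ as a binomial series, evaluating the moments $\int_0^{\pi_{p,q}/2}\sin_{p,q}^{qn}\theta\,d\theta$ via Lemma \ref{lem:integral} to obtain Proposition \ref{prop:hypergeometric}; you instead put everything into Euler's integral representation $F(a,b;c;z)=B(b,c-b)^{-1}\int_0^1 s^{b-1}(1-s)^{c-b-1}(1-zs)^{-a}\,ds$, reading off the parameters from the Beta-weighted integral for $1/\mathrm{M}_p$ and from \eqref{eq:BLkp}, and you recover the link to $\mathcal{K}_{p^*,p}$ by the algebraic substitution $t^p=s$ in its $t$-integral form rather than by a generalized-trigonometric change of variables. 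Your route is slightly more self-contained for this theorem (it never needs $\sin_{p,q}$, Lemma \ref{lem:integral} or Proposition \ref{prop:hypergeometric}, only the standing hypothesis $c>b>0$ of Euler's formula, which holds here since $c-b=1/p>0$), while the paper's route develops the $(p,q)$-elliptic machinery that it reuses elsewhere (e.g.\ for Theorem \ref{thm:pqL}) and presents the identity $1/\mathrm{M}_p(1,x)=\tfrac{2}{\pi_{p^*,p}}\mathcal{K}_{p^*,p}((1-x^p)^{1/p})$ as the conceptual centerpiece. Your quadratic transformation $F(a,b;2b;z)=(1-\tfrac z2)^{-a}F(\tfrac a2,\tfrac a2+\tfrac12;b+\tfrac12;(\tfrac{z}{2-z})^2)$ is Lemma \ref{lem:aar} with the roles of $a$ and $b$ interchanged, so the parameter bookkeeping agrees, and your justification of the strict monotonicity of $F$ in the third parameter (term-by-term, since $(\gamma)_n$ is strictly increasing in $\gamma>0$ for $n\ge1$ and the argument lies in $(0,1)$ when $a\ne b$) is exactly what the paper's ``comparing the third parameters'' tacitly uses; you also supply the $p=0$ endpoint via $\mathrm{L}(a,b)>\sqrt{ab}$, which the paper leaves to the reader.
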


Moreover, we define a \textit{complete $(p,q)$-elliptic integral of the second kind}
$$\mathcal{E}_{p,q}(k):=\int_0^{\frac{\pi_{p,q}}{2}}
(1-k^q\sin_{p,q}^{q}{\theta})^\frac1p\,d\theta
=\int_0^1 \left(\frac{1-k^qt^q}{1-t^q}\right)^\frac1p\,dt.$$
It is clear that $\mathcal{E}_{2,2}(k)$ is identical to 
the complete elliptic integral of the second kind
$$\mathcal{E}(k)=\int_0^{\frac{\pi}{2}}
\sqrt{1-k^2\sin^2{\theta}}\,d\theta
=\int_0^1 \sqrt{\frac{1-k^2t^2}{1-t^2}}\,dt.$$
Then, we can show the following relation for $p\neq q$.
\begin{thm}
\label{thm:pqL}
Let $p,\ q \in (1,\infty)$ and $k \in [0,1)$. Then
\begin{equation}
\label{eq:pqL}
p\mathcal{E}_{p,q}(k^\frac1q)\mathcal{K}_{q,p}(k^\frac1p)
-q\mathcal{K}_{p,q}(k^\frac1q)\mathcal{E}_{q,p}(k^\frac1p)=\frac{(p-q)\pi_{p,q}\pi_{q,p}}{4}.
\end{equation}
\end{thm}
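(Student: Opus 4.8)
The plan is to regard the left-hand side of \eqref{eq:pqL} as a function of $k$, prove that it has vanishing derivative on $(0,1)$, and read off the constant value from $k=0$. After performing the substitutions $(k^{1/q})^q=k$ and $(k^{1/p})^p=k$ in the defining integrals, set
\[
A(k)=\mathcal{K}_{p,q}(k^{1/q})=\int_0^1\frac{dt}{(1-t^q)^{1/p}(1-kt^q)^{1-1/p}},\qquad
B(k)=\mathcal{E}_{p,q}(k^{1/q})=\int_0^1\frac{(1-kt^q)^{1/p}}{(1-t^q)^{1/p}}\,dt,
\]
and let $C(k)=\mathcal{K}_{q,p}(k^{1/p})$, $D(k)=\mathcal{E}_{q,p}(k^{1/p})$ be the same objects with the roles of $p$ and $q$ exchanged; the left-hand side of \eqref{eq:pqL} is then $G(k):=pB(k)C(k)-qA(k)D(k)$. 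Each of $A,B,C,D$ is smooth on $(0,1)$ (differentiation under the integral sign is justified because the integrands and their $k$-derivatives are dominated by an integrable function of $t$, uniformly for $k$ in compact subsets of $[0,1)$), and at $k=0$ we have $A(0)=B(0)=\int_0^1(1-t^q)^{-1/p}\,dt=\pi_{p,q}/2$ and likewise $C(0)=D(0)=\pi_{q,p}/2$, so $G(0)=(p-q)\pi_{p,q}\pi_{q,p}/4$. Thus the theorem reduces to the assertion $G'\equiv 0$ on $(0,1)$.

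The routine half of the computation is the pair of first-order relations
\[
B'(k)=\frac{B(k)-A(k)}{pk},\qquad D'(k)=\frac{D(k)-C(k)}{qk},
\]
which follow by differentiating under the integral sign and inserting the identity $t^q(1-kt^q)^{-1+1/p}=k^{-1}\bigl[(1-kt^q)^{-1+1/p}-(1-kt^q)^{1/p}\bigr]$; they are the $(p,q)$-analogues of the classical $\frac{d}{dk}\mathcal{E}(k)=(\mathcal{E}(k)-\mathcal{K}(k))/k$.

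The heart of the matter is the companion formula for $A$ and $C$, namely
\[
A'(k)=\frac{B(k)-(1-k)A(k)}{qk(1-k)},\qquad C'(k)=\frac{D(k)-(1-k)C(k)}{pk(1-k)},
\]
and by the $(p,q)\leftrightarrow(q,p)$ symmetry it suffices to prove the first. Differentiating gives $A'(k)=(1-\tfrac1p)\int_0^1(1-t^q)^{-1/p}t^q(1-kt^q)^{-2+1/p}\,dt$, which brings in the new integral $J(k):=\int_0^1(1-t^q)^{-1/p}(1-kt^q)^{-2+1/p}\,dt$; the task is to express $J$ through $A$ and $B$. I would do this by integrating by parts with the antiderivative $h(t)=t\,(1-t^q)^{1-1/p}(1-kt^q)^{-1+1/p}$, which vanishes at $t=0$ and — because $1-\tfrac1p>0$, where $p>1$ is used — also at $t=1$, so that $\int_0^1 h'(t)\,dt=0$. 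Expanding $h'(t)$, rewriting $(1-t^q)^{1-1/p}=(1-t^q)(1-t^q)^{-1/p}$, and reducing every occurrence of $t^q$ and $t^{2q}$ to a polynomial in $(1-kt^q)$ via $kt^q=1-(1-kt^q)$, the relation $\int_0^1 h'(t)\,dt=0$ collapses to a single linear identity among $A$, $B$ and $J$; solving it for $J$ and substituting into $A'(k)=(1-\tfrac1p)k^{-1}\bigl(J(k)-A(k)\bigr)$ produces the formula above. (Alternatively these two relations can be extracted from the hypergeometric representations $A=\tfrac{\pi_{p,q}}{2}F(1-\tfrac1p,\tfrac1q;1+\tfrac1q-\tfrac1p;k)$, $B=\tfrac{\pi_{p,q}}{2}F(-\tfrac1p,\tfrac1q;1+\tfrac1q-\tfrac1p;k)$ and the hypergeometric differential equation, but the integration by parts keeps things self-contained.) I expect this to be the main obstacle: coercing the derivative of $\mathcal{K}_{p,q}$ into exactly the shape $(B-(1-k)A)/(qk(1-k))$ needs the endpoint-vanishing choice of $h$ and some patient bookkeeping, whereas the rest is mechanical.

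With the four relations in hand, substitute them into
\[
G'(k)=p\bigl(B'(k)C(k)+B(k)C'(k)\bigr)-q\bigl(A'(k)D(k)+A(k)D'(k)\bigr).
\]
The two contributions carrying the factor $B(k)D(k)/\bigl(k(1-k)\bigr)$ (one from $pB(k)C'(k)$, one from $qA'(k)D(k)$) cancel, the two contributions carrying $A(k)D(k)/k$ cancel, and the remainder is $k^{-1}\bigl(BC-AC-BC+AC\bigr)=0$. Hence $G'\equiv 0$ on $(0,1)$, so $G$ is constant, and by continuity $G(k)=G(0)=(p-q)\pi_{p,q}\pi_{q,p}/4$, which is \eqref{eq:pqL}. (A built-in consistency check: both sides of \eqref{eq:pqL} change sign under $(p,q)\mapsto(q,p)$.)
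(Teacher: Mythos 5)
Your proposal is correct and follows essentially the same route as the paper: the paper first establishes the derivative formulas for $\mathcal{K}_{p,q}$ and $\mathcal{E}_{p,q}$ (its Proposition on the system of differential equations, proved by the same integration by parts you describe, just carried out in the $\theta$-variable with the antiderivative $-\cos_{p,q}^{q/p^*}\theta\,(1-k^q\sin_{p,q}^q\theta)^{-1+1/p}$, which corresponds to your $h(t)$ under $t=\sin_{p,q}\theta$), and then shows the $k$-derivative of the left-hand side of \eqref{eq:pqL} vanishes and evaluates at $k=0$. The only cosmetic difference is that you absorb the reparametrizations $k\mapsto k^{1/q}$, $k\mapsto k^{1/p}$ into the integrals from the start rather than applying the chain rule afterwards.
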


%In \cite{T3} the author studies $\mathcal{K}_{p,q}(k)$ and $\mathcal{E}_{p,q}(k)$ in case $p=q>1$,
%which are denoted by $K_p(k)$ and $E_p(k)$. These integrals have very similar
%properties to the classical elliptic integrals $K(k)$ and $E(k)$. For example, they satisfy
%a Legendre-type relation:
%$$K_p(k')E_p(k)+K_p(k)E_p(k')-K_p(k)K_p(k')=\frac{\pi_p}{2},$$
%where $k':=(1-k^p)^\frac1p$ and $\pi_p:=\pi_{p,p}$.
%Legendre-type relation of generalized $(p,q)$-elliptic 
%integrals will appear in the forthcoming paper \cite{T5}. 

This paper is organized as follows. In Section 2 we prepare properties
of the complete $(p,q)$-elliptic integrals and show Theorem \ref{thm:pqL}.
Section 3 is devoted to give a proof of Theorem \ref{thm:main}
and an alternative proof of Theorem \ref{thm:BL4}.

Throughout this paper, we write $\mathbb{P}:=(0,1) \cup (1,\infty)$.

%%%%%%%%%%%%%%%%%%%%%%%%%%%%%%%%%%%%%%%

\section{Complete $(p,q)$-elliptic integrals}

Let $p$ and $q$ be real numbers satisfying $p^*:=p/(p-1)>0$ and $q>0$
(note that $p$ is allowed to be negative).
The $(p,q)$-trigonometric function $\sin_{p,q}{x}$ is the inverse function of
$$\sin_{p,q}^{-1}{x}:=\int_0^x \frac{dt}{(1-t^q)^\frac1p},\quad x \in [0,1].$$
Clearly, $\sin_{p,q}{x}$ is increasing function from $[0,\pi_{p,q}/2]$ onto $[0,1]$,
where
$$\pi_{p,q}:=2\sin_{p,q}^{-1}{1}
=2 \displaystyle \int_0^1 \dfrac{dt}{(1-t^q)^\frac1p}
=\frac2q B\left(\frac{1}{p^*},\frac1q\right).$$

For $x \in [0,\pi_{p,q}/2)$, we also define 
\begin{align*}
\cos_{p,q}{x}:=(1-\sin_{p,q}^q{x})^{\frac1q},\quad
\tan_{p,q}{x}:=\frac{\sin_{p,q}{x}}{\cos_{p,q}{x}}.
\end{align*}
These functions satisfy, for $x \in (0,\pi_{p,q}/2)$,
\begin{gather*}
\cos_{p,q}^q{x}+\sin_{p,q}^q{x}=1,\label{eq:cs} \\
(\sin_{p,q}{x})'=\cos_{p,q}^{\frac{q}{p}}{x},\notag \\
(\cos_{p,q}{x})'=-\sin_{p,q}^{q-1}{x}\cos_{p,q}^{1-\frac{q}{p^*}}{x}, \notag \\
(\cos_{p,q}^{\frac{q}{p^*}}{x})'=-\frac{q}{p^*}\sin_{p,q}^{q-1}{x}, \notag \\
(\tan_{p,q}{x})'=\cos_{p,q}^{-1-\frac{q}{p^*}}{x}. \notag
\end{gather*}

Now, for any $k \in [0,1)$ 
we define the \textit{complete $(p,q)$-elliptic integral of the first kind}
and \textit{of the second kind}
as follows.
\begin{align*}
\mathcal{K}_{p,q}(k)&:=\int_0^{\frac{\pi_{p,q}}{2}} \frac{d\theta}
{(1-k^q\sin_{p,q}^q{\theta})^{\frac{1}{p^*}}}
=\int_0^1 \frac{dt}{(1-t^q)^\frac1p (1-k^qt^q)^{\frac{1}{p^*}}},\\
\mathcal{E}_{p,q}(k)&:=\int_0^{\frac{\pi_{p,q}}{2}} (1-k^q\sin_{p,q}^q{\theta})^{\frac1p} d\theta
=\int_0^1 \left(\frac{1-k^qt^q}{1-t^q}\right)^\frac1p\,dt.
\end{align*}
It is easy to see that $\mathcal{K}_{p,q}(k)$ is increasing on $[0,1)$ and
$$\mathcal{K}_{p,q}(0)=\frac{\pi_{p,q}}{2},\quad
\lim_{k \to 1-0}\mathcal{K}_{p,q}(k)=\infty,$$
and $\mathcal{E}_{p,q}(k)$ is decreasing on $[0,1)$ and
$$\mathcal{E}_{p,q}(0)=\frac{\pi_{p,q}}{2},\quad
\lim_{k \to 1-0}\mathcal{E}_{p,q}(k)=1,$$

The functions $\mathcal{K}_{p,q}(k)$ and $\mathcal{E}_{p,q}(k)$ satisfy a system of differential equations.
\begin{prop}
\label{prop:p-differential}
$$\frac{d\mathcal{E}_{p,q}}{dk}=\frac{q(\mathcal{E}_{p,q}-\mathcal{K}_{p,q})}{pk},\quad
\frac{d\mathcal{K}_{p,q}}{dk}=\dfrac{\mathcal{E}_{p,q}-(1-k^q)\mathcal{K}_{p,q}}{k(1-k^q)}.$$
\end{prop}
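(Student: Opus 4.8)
The plan is to obtain both identities by differentiating under the integral sign in the representations
$$\mathcal{K}_{p,q}(k)=\int_0^1 D^{-1/p}E^{-1/p^*}\,dt,\qquad \mathcal{E}_{p,q}(k)=\int_0^1 D^{-1/p}E^{1/p}\,dt,$$
where I abbreviate $D:=1-t^q$ and $E:=1-k^qt^q$ and use repeatedly that $\tfrac1p+\tfrac1{p^*}=1$ together with the linear relation $E=k^qD+(1-k^q)$. The first identity is immediate: differentiating the integrand of $\mathcal{E}_{p,q}$ gives $\partial_k E^{1/p}=-\tfrac{q}{p}k^{q-1}t^qE^{1/p-1}$, and substituting $t^q=(1-E)/k^q$ turns the result into $-\tfrac{q}{pk}\int_0^1 D^{-1/p}(E^{-1/p^*}-E^{1/p})\,dt$; recognising the two pieces as $\mathcal{K}_{p,q}$ and $\mathcal{E}_{p,q}$ yields $d\mathcal{E}_{p,q}/dk=q(\mathcal{E}_{p,q}-\mathcal{K}_{p,q})/(pk)$, which is the first formula.

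For the second identity I would differentiate $\mathcal{K}_{p,q}$ in the same way. Using $\partial_k E^{-1/p^*}=\tfrac{q}{p^*}k^{q-1}t^qE^{-1/p^*-1}$ and again $t^q=(1-E)/k^q$, this produces $d\mathcal{K}_{p,q}/dk=\tfrac{q}{p^*k}(\tilde J-\mathcal{K}_{p,q})$, where $\tilde J:=\int_0^1 D^{-1/p}E^{-1/p^*-1}\,dt$ carries one extra power of $E$ in the denominator and is not itself one of the two basic integrals. The crux of the proof is therefore to express $\tilde J$ through $\mathcal{K}_{p,q}$ and $\mathcal{E}_{p,q}$.

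To do this I would integrate by parts the exact derivative $\tfrac{d}{dt}\bigl(t\,D^{1/p^*}E^{-1/p^*}\bigr)$ over $[0,1]$. Because $1/p^*>0$, the factor $D^{1/p^*}$ kills the endpoint $t=1$ and the explicit factor $t$ kills $t=0$, so both boundary terms vanish and the integral of this derivative is zero. Carrying out the differentiation (using $\partial_t D=-qt^{q-1}$, $\partial_t E=-qk^qt^{q-1}$ and $t^q=1-D$) and then rewriting each surviving $D^{1/p^*}=D\cdot D^{-1/p}$ via $D=(E-(1-k^q))/k^q$ turns every term into an integrand of $\mathcal{K}_{p,q}$, $\mathcal{E}_{p,q}$, or $\tilde J$. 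This yields a single linear relation among the three, which I would solve for $\tilde J=\tfrac{p^*}{q(1-k^q)}\mathcal{E}_{p,q}+\tfrac{q-p^*}{q}\mathcal{K}_{p,q}$. Substituting into $d\mathcal{K}_{p,q}/dk=\tfrac{q}{p^*k}(\tilde J-\mathcal{K}_{p,q})$ and simplifying collapses everything to $(\mathcal{E}_{p,q}-(1-k^q)\mathcal{K}_{p,q})/(k(1-k^q))$, as claimed.

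The only real obstacle is guessing the correct antiderivative $t\,D^{1/p^*}E^{-1/p^*}$: the exponent $1/p^*$ on $D$ is forced both by the need for vanishing boundary terms and by the requirement that differentiation produce exactly the power $D^{-1/p}$ of $\mathcal{K}_{p,q}$ rather than a more singular one, while the exponent $-1/p^*$ on $E$ is chosen so that the two displayed algebraic relations close the system among $\mathcal{K}_{p,q}$, $\mathcal{E}_{p,q}$, $\tilde J$. Once this choice is made the reductions are routine. The same computation can be run in the $\theta$-representation using the rules $(\sin_{p,q}x)'=\cos_{p,q}^{q/p}x$ and $(\cos_{p,q}^{q/p^*}x)'=-\tfrac{q}{p^*}\sin_{p,q}^{q-1}x$ recorded above, but the $t$-form keeps the integration by parts purely elementary.
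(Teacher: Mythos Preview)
Your argument is correct and is essentially the paper's own proof transcribed into the $t$-variable: the paper differentiates under the integral in the $\theta$-form and, for $\mathcal{K}_{p,q}$, integrates by parts against $\sin_{p,q}\theta$ using the antiderivative $-\cos_{p,q}^{q/p^*}\theta\,(1-k^q\sin_{p,q}^q\theta)^{-1/p^*}$, which under $t=\sin_{p,q}\theta$ is exactly your $-t\,D^{1/p^*}E^{-1/p^*}$. The only difference is cosmetic: in the $\theta$-form the integration by parts lands directly on $\int \cos_{p,q}^q\theta\,(1-k^q\sin_{p,q}^q\theta)^{-1/p^*}\,d\theta$ and the single substitution $\cos_{p,q}^q\theta=\tfrac{1}{k^q}[(1-k^q\sin_{p,q}^q\theta)-(1-k^q)]$ finishes it, whereas your $t$-route has to invoke $D=(E-(1-k^q))/k^q$ twice to reduce the auxiliary integrals $P$ and $Q$ before solving for $\tilde J$.
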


\begin{proof}
Differentiating $\mathcal{E}_{p,q}(k)$ we have
\begin{align*}
\frac{d\mathcal{E}_{p,q}}{dk}
&=\int_0^{\frac{\pi_{p,q}}{2}}
\frac{d}{dk}(1-k^q\sin_{p,q}^q{\theta})^{\frac1p}\,d\theta\\
&=\frac{q}{p}\int_0^{\frac{\pi_{p,q}}{2}}
\dfrac{-k^{q-1}\sin_{p,q}^q{\theta}}{(1-k^q\sin_{p,q}^q{\theta})
^{1-\frac1p}}\,d\theta\\
&=\frac{q}{pk} \left( \int_0^{\frac{\pi_{p,q}}{2}}
\dfrac{1-k^q\sin_{p,q}^q{\theta}}{(1-k^q\sin_{p,q}^q{\theta})^{1-\frac1p}}
\,d\theta
- \int_0^{\frac{\pi_{p,q}}{2}}
\dfrac{d\theta}{(1-k^q\sin_{p,q}^q{\theta})^{1-\frac1p}} \right) \\
&=\frac{q}{pk} (\mathcal{E}_{p,q}-\mathcal{K}_{p,q}).
\end{align*}

Next, for $\mathcal{K}_{p,q}(k)$ 
\begin{align}
\label{eq:dK}
\frac{d\mathcal{K}_{p,q}}{dk}
=\frac{q}{p^*}\int_0^{\frac{\pi_{p,q}}{2}}
\dfrac{k^{q-1}\sin_{p,q}^q{\theta}}
{(1-k^q\sin_{p,q}^q{\theta})^{2-\frac1p}}\,d\theta.
\end{align}
Here we see that 
\begin{align*}
\frac{d}{d\theta} 
&\left(\frac{-\cos_{p,q}^{\frac{q}{p^*}}{\theta}}
{(1-k^q\sin_{p,q}^q{\theta})^{1-\frac1p}}\right)\\
&=\frac{\frac{q}{p^*}\sin_{p,q}^{q-1}{\theta}(1-k^q\sin_{p,q}^q{\theta})
-\frac{q}{p^*}k^q\sin_{p,q}^{q-1}{\theta}\cos_{p,q}^q{\theta}}
{(1-k^q\sin_{p,q}^q{\theta})^{2-\frac1p}}\\
&=\frac{q(1-k^q)\sin_{p,q}^{q-1}{\theta}}
{p^*(1-k^q\sin_{p,q}^q{\theta})^{2-\frac1p}},
\end{align*}
so that we use integration by parts as
\begin{align*}
\frac{d\mathcal{K}_{p,q}}{dk}
&=\int_0^{\frac{\pi_{p,q}}{2}}
\frac{k^{q-1}}{1-k^q}\frac{d}{d\theta} 
\left(\frac{-\cos_{p,q}^{\frac{q}{p^*}}{\theta}}
{(1-k^q\sin_{p,q}^q{\theta})^{1-\frac1p}}\right)
\sin_{p,q}{\theta}\,d\theta\\
&=\frac{k^{q-1}}{1-k^q} \left[\frac{-\cos_{p,q}^{\frac{q}{p^*}}{\theta}
\sin_{p,q}{\theta}}
{(1-k^q\sin_{p,q}^q{\theta})^{1-\frac1p}}
\right]_0^{\frac{\pi_{p,q}}{2}}
+\frac{k^{q-1}}{1-k^q} \int_0^{\frac{\pi_{p,q}}{2}} 
\frac{\cos_{p,q}^q{\theta}}{(1-k^q\sin_{p,q}^q{\theta})^{1-\frac1p}}
\,d\theta\\
&=\frac{k^{q-1}}{1-k^q} \int_0^{\frac{\pi_{p,q}}{2}}
\frac{1}{k^q} \cdot
\frac{1-k^q\sin_{p,q}^q{\theta}-(1-k^q)}{(1-k^q\sin_{p,q}^q{\theta})^{1-\frac1p}}\,d\theta\\
&=\frac{1}{k(1-k^q)}(\mathcal{E}_{p,q}-(1-k^q)\mathcal{K}_{p,q}).
\end{align*}
This completes the proof.
\end{proof}

%%%%%

Proposition \ref{prop:p-differential} now yields 
Theorem \ref{thm:pqL}.

\begin{proof}[Proof of Theorem \ref{thm:pqL}]
We will differentiate the left-hand side of \eqref{eq:pqL}
and apply Proposition \ref{prop:p-differential}.
A direct computation shows that
\begin{align*}
\frac{d}{dk}(p\mathcal{E}_{p,q}(k^\frac1q) & \mathcal{K}_{q,p}(k^\frac1p)
-q\mathcal{K}_{p,q}(k^\frac1q)\mathcal{E}_{q,p}(k^\frac1p))\\
& =p \cdot \frac{1}{pk}(\mathcal{E}_{p,q}(k^\frac1q)-\mathcal{K}_{p,q}(k^\frac1q))\cdot \mathcal{K}_{q,p}(k^\frac1p)\\
& \quad \quad  
+p\mathcal{E}_{p,q}(k^\frac1q)\cdot \frac{1}{pk(1-k)}(\mathcal{E}_{q,p}(k^\frac1p)-(1-k)\mathcal{K}_{q,p}(k^\frac1p))\\
& \quad \quad \quad -q \cdot \frac{1}{qk(1-k)}(\mathcal{E}_{p,q}(k^\frac1q)-(1-k)\mathcal{K}_{p,q}(k^\frac1q))
\cdot \mathcal{E}_{q,p}(k^\frac1p)\\
& \quad \quad \quad \quad \quad -q\mathcal{K}_{p,q}(k^\frac1q)\cdot \frac{1}{qk}(\mathcal{E}_{q,p}(k^\frac1p)
-\mathcal{K}_{q,p}(k^\frac1p))\\
&=0.
\end{align*}
Therefore the left-hand side of \eqref{eq:pqL} is a constant $C$.
Letting $k=0$, we obtain
$$C=p\frac{\pi_{p,q}}{2}\frac{\pi_{q,p}}{2}-q\frac{\pi_{p,q}}{2}\frac{\pi_{q,p}}{2}
=\frac{(p-q)\pi_{p,q}\pi_{q,p}}{4},$$ 
and the proof is complete.
\end{proof}

%The remainder of this section will be devoted to the study of relation
%between complete $(p,q)$-elliptic integrals and hypergeometric series.

For a real number $a$ and a natural number $n$,
we define
$$(a)_n:=\frac{\Gamma(a+n)}{\Gamma(a)}
=(a+n-1)(a+n-2)\cdots (a+1) a.$$
We adopt the convention that $(a)_0:=1$.
For $|x|<1$ the series
$$F(a,b;c;x):=\sum_{n=0}^\infty \frac{(a)_n(b)_n}{(c)_n}\frac{x^n}{n!}$$
is called a \textit{Gaussian hypergeometric series}. 
See \cite{AAR} for more details.

\begin{lem}
\label{lem:integral}
For $n=0,1,2,\ldots$
$$\int_0^{\frac{\pi_{p,q}}{2}} \sin_{p,q}^{qn}{\theta}\,d\theta
=\frac{\pi_{p,q}}{2}\frac{(\frac1q)_n}{(\frac{1}{p^*}+\frac1q)_n}.$$
\end{lem}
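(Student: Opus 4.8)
The plan is to evaluate the integral $\int_0^{\pi_{p,q}/2}\sin_{p,q}^{qn}\theta\,d\theta$ by substituting $t=\sin_{p,q}\theta$, which transforms it into a Beta-type integral, and then to recognize the resulting expression in terms of Gamma functions and the Pochhammer symbols. Concretely, with $t=\sin_{p,q}\theta$ we have $dt=\cos_{p,q}^{q/p}\theta\,d\theta=(1-t^q)^{1/p}\,d\theta$, so $d\theta=(1-t^q)^{-1/p}\,dt$ and the integral becomes $\int_0^1 t^{qn}(1-t^q)^{-1/p}\,dt$. A further substitution $u=t^q$ (so $t=u^{1/q}$, $dt=\frac1q u^{1/q-1}\,du$) turns this into $\frac1q\int_0^1 u^{n+\frac1q-1}(1-u)^{\frac{1}{p^*}-1}\,du=\frac1q B\!\left(n+\tfrac1q,\tfrac{1}{p^*}\right)$, using $1-\frac1p=\frac{1}{p^*}$.

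Next I would rewrite this Beta value using $B(x,y)=\Gamma(x)\Gamma(y)/\Gamma(x+y)$, giving
$$\int_0^{\frac{\pi_{p,q}}{2}}\sin_{p,q}^{qn}\theta\,d\theta
=\frac1q\,\frac{\Gamma\!\left(n+\frac1q\right)\Gamma\!\left(\frac{1}{p^*}\right)}{\Gamma\!\left(n+\frac1q+\frac{1}{p^*}\right)}.$$
For $n=0$ this is exactly $\frac1q B\!\left(\frac{1}{p^*},\frac1q\right)=\frac{\pi_{p,q}}{2}$ by the formula for $\pi_{p,q}$ recorded in Section 2. To extract the ratio $(\frac1q)_n/(\frac{1}{p^*}+\frac1q)_n$, I would divide the general-$n$ expression by the $n=0$ expression:
$$\frac{1}{\pi_{p,q}/2}\int_0^{\frac{\pi_{p,q}}{2}}\sin_{p,q}^{qn}\theta\,d\theta
=\frac{\Gamma\!\left(n+\frac1q\right)/\Gamma\!\left(\frac1q\right)}{\Gamma\!\left(n+\frac1q+\frac{1}{p^*}\right)/\Gamma\!\left(\frac1q+\frac{1}{p^*}\right)}
=\frac{\left(\frac1q\right)_n}{\left(\frac{1}{p^*}+\frac1q\right)_n},$$
using the definition $(a)_n=\Gamma(a+n)/\Gamma(a)$ given just above the lemma. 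Multiplying back through by $\pi_{p,q}/2$ yields the claimed identity.

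There is no serious obstacle here; the only points requiring a little care are the bookkeeping in the change of variables (making sure the endpoints $\theta=0,\pi_{p,q}/2$ correspond to $t=0,1$ and that the Jacobian is handled correctly) and the convergence of the Beta integral, which holds since $p^*>0$ and $q>0$ imply both exponents $n+\frac1q-1>-1$ and $\frac{1}{p^*}-1>-1$. One should also note $\cos_{p,q}^{q/p}\theta=(1-\sin_{p,q}^q\theta)^{1/p}$, which follows from the Pythagorean-type identity $\cos_{p,q}^q\theta+\sin_{p,q}^q\theta=1$ stated in Section 2, so that the derivative formula $(\sin_{p,q}\theta)'=\cos_{p,q}^{q/p}\theta$ gives the substitution cleanly. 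The rest is routine manipulation of Gamma functions and Pochhammer symbols.
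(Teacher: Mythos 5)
Your proof is correct and follows essentially the same route as the paper: the paper substitutes $\sin_{p,q}^q\theta=t$ in one step to obtain $\frac1q B\left(n+\frac1q,\frac{1}{p^*}\right)$ and then factors out $\frac1q B\left(\frac1q,\frac{1}{p^*}\right)=\frac{\pi_{p,q}}{2}$ before converting the Gamma-function ratios to Pochhammer symbols. Your two-step change of variables and your normalization by the $n=0$ case are only cosmetic variations of the same computation.
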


\begin{proof}
Letting $\sin_{p,q}^q{\theta}=t$, we have 
$$
\int_0^{\frac{\pi_{p,q}}{2}} \sin_{p,q}^{qn}{\theta}\,d\theta
=\frac1q \int_0^1 t^{n+\frac1q-1}(1-t)^{-\frac1p}\,dt
=\frac1q B\left(n+\frac1q,\frac{1}{p^*} \right).
$$
Moreover,
\begin{align*}
\frac1q B\left(n+\frac1q,\frac{1}{p^*} \right)
&=\frac1q B\left(\frac1q,\frac{1}{p^*} \right)
\frac{B\left( n+\frac1q,\frac{1}{p^*} \right)}
{B\left(\frac1q,\frac{1}{p^*} \right)}\\
&=\frac{\pi_{p,q}}{2}\frac{\Gamma(n+\frac1q)\Gamma(\frac1q+\frac{1}{p^*})}
{\Gamma(\frac1q)\Gamma(n+\frac1q+\frac{1}{p^*}) }\\
&=\frac{\pi_{p,q}}{2}\frac{(\frac1q)_n}{(\frac{1}{p^*}+\frac1q)_n},
\end{align*}
and the lemma follows.
\end{proof}

\begin{prop}
\label{prop:hypergeometric}
\begin{align*}
\mathcal{K}_{p,q}(k)
&=\frac{\pi_{p,q}}{2} F\left(\frac{1}{p^*},\frac1q;\frac{1}{p^*}+\frac1q;k^q\right),\\
\mathcal{E}_{p,q}(k)
&=\frac{\pi_{p,q}}{2} F\left(-\frac1p,\frac1q;\frac{1}{p^*}+\frac1q;k^q\right).
\end{align*}

%In particular,
%$$K_{p^*p}(k)
%=\frac{\pi_{p^*p}}{2} F\left(\frac1p,\frac1p;\frac2p;k^p\right).$$
\end{prop}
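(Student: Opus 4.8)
The plan is to expand the integrand of each complete $(p,q)$-elliptic integral into a binomial series in the variable $k^q\sin_{p,q}^q{\theta}$ and to integrate term by term, evaluating each resulting integral by Lemma \ref{lem:integral}.

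For $\mathcal{K}_{p,q}(k)$, start from the binomial series $(1-x)^{-1/p^*}=\sum_{n=0}^\infty \frac{(1/p^*)_n}{n!}x^n$, valid for $|x|<1$, and put $x=k^q\sin_{p,q}^q{\theta}$. Interchanging summation and integration over $[0,\pi_{p,q}/2]$ and then applying Lemma \ref{lem:integral} gives
$$\mathcal{K}_{p,q}(k)=\sum_{n=0}^\infty \frac{(1/p^*)_n}{n!}k^{qn}\int_0^{\frac{\pi_{p,q}}{2}}\sin_{p,q}^{qn}{\theta}\,d\theta=\frac{\pi_{p,q}}{2}\sum_{n=0}^\infty \frac{(1/p^*)_n(1/q)_n}{(1/p^*+1/q)_n}\frac{k^{qn}}{n!},$$
which is precisely $\frac{\pi_{p,q}}{2}F(1/p^*,1/q;1/p^*+1/q;k^q)$. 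The computation for $\mathcal{E}_{p,q}(k)$ is identical, starting instead from $(1-x)^{1/p}=\sum_{n=0}^\infty \frac{(-1/p)_n}{n!}x^n$, so that the leading hypergeometric parameter becomes $-1/p$; Lemma \ref{lem:integral} again collapses the series to $\frac{\pi_{p,q}}{2}F(-1/p,1/q;1/p^*+1/q;k^q)$.

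The one step needing justification --- and the only real obstacle --- is the interchange of summation and integration. For fixed $k\in[0,1)$ we have $0\le k^q\sin_{p,q}^q{\theta}\le k^q<1$ uniformly for $\theta\in[0,\pi_{p,q}/2]$, so in each case the series in $\theta$ is dominated by the convergent numerical series obtained by replacing $\sin_{p,q}^q{\theta}$ with $1$; hence it converges uniformly on the bounded interval of integration and the term-by-term integration is legitimate. As a consistency check, the hypergeometric series obtained for $\mathcal{K}_{p,q}$ has parameter excess $c-a-b=0$, so it converges for $k^q<1$ and blows up as $k\to1-0$, matching $\mathcal{K}_{p,q}(1-0)=\infty$ noted above, whereas for $\mathcal{E}_{p,q}$ the excess is $c-a-b=1/p+1/p^*=1$, consistent with $\mathcal{E}_{p,q}$ remaining finite on all of $[0,1]$.
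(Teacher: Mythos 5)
Your proof is correct and follows essentially the same route as the paper: binomial expansion of the integrand in powers of $k^q\sin_{p,q}^q\theta$, term-by-term integration, and Lemma \ref{lem:integral} to identify the Pochhammer ratio. The only difference is that you explicitly justify the interchange of sum and integral (and add the parameter-excess consistency check), which the paper leaves implicit.
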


\begin{proof}
Binomial series expansion gives
\begin{align*}
\mathcal{K}_{p,q}(k)
=\int_0^{\frac{\pi_{p,q}}{2}} 
(1-k^q\sin_{p,q}^q{\theta})^{-\frac{1}{p^*}}\,d\theta
=\sum_{n=0}^\infty (-1)^n \binom{-\frac{1}{p^*}}{n} k^{qn}
\int_0^{\frac{\pi_{p,q}}{2}} \sin_{p,q}^{qn}{\theta}\,d\theta.
\end{align*}
Here, using Lemma \ref{lem:integral} and the fact
\begin{align*}
(-1)^n \binom{-\frac{1}{p^*}}{n}
%&=(-1)^n \frac{(-\frac{1}{p^*})(-\frac{1}{p^*}-1)\cdots(-\frac{1}{p^*}-n+1)}{n!}
=\frac{(\frac{1}{p^*})_n}{n!},
\end{align*}
we see that 
$$\mathcal{K}_{p,q}(k)=\frac{\pi_{p,q}}{2}\sum_{n=0}^\infty
\frac{(\frac{1}{p^*})_n (\frac1q)_n}{(\frac{1}{p^*}+\frac1q)_n}\frac{k^{qn}}{n!}
=\frac{\pi_{p,q}}{2} F\left(\frac{1}{p^*},\frac1q;\frac{1}{p^*}+\frac1q;k^q
\right).$$
The proof of $\mathcal{E}_{p,q}(k)$ is similar, so that we omit it.
\end{proof}

\section{Proof of Theorem \ref{thm:main}}

By properties (ii) and (iii) of binary symmetric mean in the introduction, 
we may assume that $a \geq b>0$ and it is enough to consider
$\mathrm{M}_p(1,x)$ for any $x \in (0,1]$ instead of $\mathrm{M}_p(a,b)$.

The following is a fundamental quadratic transformation of hypergeometric functions.
For the proof, see for instance \cite[Theorem 3.1.3]{AAR}.

\begin{lem}
\label{lem:aar}
For all $x$ where the series converge
$$F(a,b;2a;x)=\left(1-\frac{x}{2}\right)^{-b}
F\left(\frac{b}{2},\frac{b+1}{2};a+\frac12;\left(\frac{x}{2-x}\right)^2\right).$$
\end{lem}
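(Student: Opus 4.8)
The plan is to prove this quadratic transformation directly from Euler's integral representation, exploiting the special parameter relation $c=2a$ to produce an integrand symmetric about the midpoint of the interval. Concretely, I would start from
$$F(a,b;2a;x)=\frac{\Gamma(2a)}{\Gamma(a)^2}\int_0^1\left(t(1-t)\right)^{a-1}(1-xt)^{-b}\,dt,$$
which is Euler's integral with $c=2a$ and $c-a=a$, valid for $a>0$ (the values $a=1/p$ and $a=1$ arising when this lemma is applied in Theorem \ref{thm:main} both satisfy this). The appearance of the factor $t(1-t)$, which is symmetric under $t\mapsto 1-t$, is precisely what drives the quadratic transformation.

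The first key step is the substitution $t=(1+s)/2$, centering the integral at $t=1/2$ so that $s$ runs over $[-1,1]$. Under this change one has $t(1-t)=(1-s^2)/4$ and $1-xt=(1-x/2)\bigl(1-\tfrac{x}{2-x}\,s\bigr)$, so writing $u:=x/(2-x)$ the factor $(1-x/2)^{-b}$ pulls out front and the remaining integrand is $(1-s^2)^{a-1}(1-us)^{-b}$ over the symmetric interval. Since $(1-s^2)^{a-1}$ is even in $s$, expanding $(1-us)^{-b}=\sum_k\frac{(b)_k}{k!}(us)^k$ (convergent for $|u|<1$, i.e.\ $0<x<1$) annihilates every odd power of $s$, and the surviving even terms $k=2m$ reduce, via $v=s^2$, to Beta integrals $B(m+\tfrac12,a)=\Gamma(m+\tfrac12)\Gamma(a)/\Gamma(m+a+\tfrac12)$.

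The second key step is to resum the resulting series in $u^2$ and recognize it as a Gaussian hypergeometric function. Here I would use the duplication identities $(b)_{2m}=4^m(b/2)_m((b+1)/2)_m$ and $(2m)!=4^m m!\,(\tfrac12)_m$, together with $\Gamma(m+\tfrac12)=(\tfrac12)_m\Gamma(\tfrac12)$ and $\Gamma(m+a+\tfrac12)=(a+\tfrac12)_m\Gamma(a+\tfrac12)$; after the factors $(\tfrac12)_m$ and $4^m$ cancel, the general term collapses to $\frac{(b/2)_m((b+1)/2)_m}{(a+\frac12)_m\,m!}u^{2m}$, which are exactly the coefficients of $F\!\left(\frac b2,\frac{b+1}2;a+\frac12;u^2\right)$ with $u^2=(x/(2-x))^2$.

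The final, and most error-prone, step is to verify that all the accumulated Gamma-function constants collapse to $1$, so that no spurious prefactor survives in front of $(1-x/2)^{-b}$. This amounts to checking that $\frac{\Gamma(2a)}{\Gamma(a)^2}\cdot\frac{1}{2\cdot 4^{a-1}}\cdot\frac{\Gamma(a)\Gamma(\frac12)}{\Gamma(a+\frac12)}=1$, which follows from the Legendre duplication formula $\Gamma(2a)=\frac{2^{2a-1}}{\sqrt\pi}\Gamma(a)\Gamma(a+\frac12)$ together with $\Gamma(\frac12)=\sqrt\pi$. I expect the bookkeeping of these constants to be the main obstacle; everything else is a routine change of variables plus term-by-term integration. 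For parameter ranges where $a>0$ fails, or to assert the identity purely ``wherever the series converge,'' I would then invoke analytic continuation in $a$, both sides being analytic, to remove the temporary restriction.
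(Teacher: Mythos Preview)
Your argument is correct: with $c=2a$ Euler's integral produces the symmetric factor $(t(1-t))^{a-1}$, the centering $t=(1+s)/2$ plus parity kills the odd powers, the even Beta integrals together with the duplication identities $(b)_{2m}=4^m(b/2)_m((b+1)/2)_m$ and $(2m)!=4^m m!\,(1/2)_m$ give exactly the coefficients of $F(b/2,(b+1)/2;a+1/2;u^2)$, and the Legendre duplication formula makes the leftover constant equal to $1$, just as you computed.

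There is nothing to compare, however, because the paper does not prove this lemma: it simply cites \cite[Theorem~3.1.3]{AAR} and moves on. So your proposal is not an alternative to the paper's proof but rather a self-contained replacement for the bare citation. What you have written is in fact one of the standard derivations of this quadratic transformation (essentially the one given in Andrews--Askey--Roy and elsewhere), so your write-up would be entirely appropriate if an explicit proof were desired; the paper's authors evidently judged a reference sufficient since the lemma is a classical identity used only as a tool in the proof of Theorem~\ref{thm:main}.
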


Now we are in a position to prove Theorem \ref{thm:main}.
%\begin{proof}[Proof of Theorem \ref{thm:main}]
Let $p \in \mathbb{P}$.
Setting $t=x\tan_{p^*,p}{\theta}$ in the right-hand side of
$$\frac{1}{\mathrm{M}_p(1,x)}=c_p \int_0^\infty
\frac{dt}{((t^p+1)(t^p+x^p))^{\frac{1}{p}}},$$
we have
\begin{align*}
\frac{1}{\mathrm{M}_p(1,x)}
&=c_p
%\left(\int_0^{\frac{\pi_{p^*p}}{2}}\,d\theta\right)^{-1}
\int_0^{\frac{\pi_{p^*,p}}{2}}
\frac{x\cos_{p^*,p}^{-2}{\theta}\,d\theta}{(x^p\tan_{p^*,p}^p{\theta}+1)^\frac1p
(x^p\tan_{p^*,p}^p{\theta}+x^p)^\frac1p}\\
&=c_p
%\frac{2}{\pi_{p^*p}} 
\int_0^{\frac{\pi_{p^*,p}}{2}}
\frac{d\theta}{(\cos_{p^*,p}^p{\theta}+x^p\sin_{p^*,p}^p{\theta})^{\frac1p}}\\
&=c_p
\int_0^{\frac{\pi_{p^*,p}}{2}}
\frac{d\theta}{\left(1-(1-x^p)\sin_{p^*,p}^p{\theta}\right)^{\frac1p}}\\
&=c_p
\mathcal{K}_{p^*,p}((1-x^p)^{\frac1p}),
\end{align*} 
where 
$$\frac{1}{c_p}=\int_0^\infty \frac{dt}{(1+t^p)^{\frac{2}{p}}}
%=\int_0^{\frac{\pi_{p^*p}}{2}}\,d\theta
=\frac{\pi_{p^*,p}}{2}.$$
Thus, Proposition \ref{prop:hypergeometric} yields \eqref{eq:hypergeometric},
i.e.,
$$\frac{1}{\mathrm{M}_p(1,x)}
=F\left(\frac1p,\frac1p;\frac{2}{p};1-x^p\right).$$
Applying Lemma \ref{lem:aar}
with $a=b=1/p$ and $x$ replaced by $1-x^p$ to \eqref{eq:hypergeometric},
we have \eqref{eq:mp}.

Next, recall that $1/\mathrm{K}_p(1,x)$ can be written as \eqref{eq:BLkp}.
As in the proof of Proposition \ref{prop:hypergeometric},
we obtain
\begin{align*}
\frac{1}{\mathrm{K}_p(1,x)}
&=\int_0^1 (1-(1-x^p)s)^{-\frac1p}\,ds\\
&=\sum_{n=0}^\infty (-1)^n \binom{-\frac1p}{n} (1-x^p)^n \int_0^1 s^n\,ds\\
&=\sum_{n=0}^\infty \frac{(\frac1p)_n}{(n+1)!} (1-x^p)^n\\
%&=\sum_{n=0}^\infty \frac{(1)_n (\frac1p)_n}{(2)_n}\frac{(1-x^p)^n}{n!}\\
&=F\left(1,\frac1p;2;1-x^p\right),
\end{align*}
which implies \eqref{eq:hypergeometric2}.
Applying Lemma \ref{lem:aar} with $a=1,\ b=1/p$ and $x$ replaced
by $1-x^p$ to the last series, we have \eqref{eq:kp}.
Therefore, we accomplished the proof of Theorem \ref{thm:main}.
%\end{proof}
\medskip

Theorem \ref{thm:BL4} immediately follows from Theorem \ref{thm:main}. 
Indeed, we assume $p \in \mathbb{P}$.
Comparing the third parameters of \eqref{eq:mp} and \eqref{eq:kp}, we can see that 
$$\frac{1}{\mathrm{M}_p(1,x)} \gtrless \frac{1}{\mathrm{K}_p(1,x)} \quad
\Leftrightarrow \quad
\frac1p+\frac12 \lessgtr \frac32,$$
hence
$$\mathrm{M}_p(1,x) \gtrless \mathrm{K}_p(1,x) \quad
\Leftrightarrow \quad
p \lessgtr 1.$$
We leave it to the reader to verify that $\mathrm{M}_0(a,b)>\mathrm{K}_0(a,b)$. 

%Finally, we give some remarks to obtain theorems of Bhatia and Li.
%To get \eqref{eq:BL2} and \eqref{eq:BL3} 
%from \eqref{eq:mp} and \eqref{eq:kp}, it should be noted that
%$$\frac{(\frac{1}{2p})_n (\frac{1}{2p}+\frac12)_n}{(\frac1p+\frac12)_n}
%=\left(\frac1p\right)_n \frac{(\frac1p)_{2n}}{(\frac2p)_{2n}},\quad
%\frac{(\frac{1}{2p})_n (\frac{1}{2p}+\frac12)_n}{(\frac32)_n n!}
%=\frac{(\frac1p)_{2n}}{(2n+1)!}.$$

\begin{rem}
Motivated by the expression in \cite{BL}:
\begin{equation}
\label{eq:BL1}
\frac{1}{\mathrm{M}_p(a,b)}=(\max\{a,b\})^{-1}
\sum_{k=0}^\infty \prod_{i=0}^{k-1}
\frac{(\frac1p+i)^2}{\frac2p+i} \frac{1}{k!}
\left[1-\left(\frac{\min\{a,b\}}{\max\{a,b\}}\right)^p\right]^k,
\end{equation}
Nakamura \cite[Remark 3.9]{Na} also indicates
that $1/\mathrm{M}_p(1,x)$ is nothing but \eqref{eq:hypergeometric}.
On the other hand, our proof gives \eqref{eq:hypergeometric}
without deducing \eqref{eq:BL1}.
\end{rem}

\begin{rem}
Each mean of $\mathrm{L}(a,b),\ \mathrm{AG}(a,b)$ and $\mathrm{K}_p(a,b)$
has the other characterization than the integral expression. 
It is of interest to characterize Bhatia-Li's mean $\mathrm{M}_p(a,b)$ 
with no use of integral,
but we have not been able to do this. 
\end{rem}

%
%
%
%Our main result is
%
%\begin{thm}
%\begin{align*} 
%\frac{1}{\mathrm{M}_p(1,x)}&=F\left(\frac1p,\frac1p;\frac2p;1-x^p\right).\\
%&=\left(\frac{1+x^p}{2}\right)^{-\frac1p}
%F\left(\frac{1}{2p},\frac{1}{2p}+\frac12;\frac1p+\frac12;
%\left(\frac{1-x^p}{1+x^p}\right)^2\right).
%\end{align*}
%\end{thm} 
%
%The \textit{power difference mean} $K_p(a,b)$ for positive numbers $a$ and $b$
%is defined as
%$$K_p(a,b)=\frac{p-1}{p}\frac{a^p-b^p}{a^{p-1}-b^{p-1}}.$$
%
%We can prove
%
%\begin{thm}
%\begin{align*} 
%\frac{1}{K_p(1,x)}&=\left(\frac{1+x^p}{2}\right)^{-\frac1p}
%F\left(\frac{1}{2p},\frac{1}{2p}+\frac12;\frac32;
%\left(\frac{1-x^p}{1+x^p}\right)^2\right).
%\end{align*}
%\end{thm} 
%
%Therefore, it is immediately shown that
%
%\begin{thm}
%Given $a,\ b>0$ and $a \neq b$, we have
%\begin{enumerate}
%\item $\mathrm{M}_p(a,b)>K_p(a,b)$ if $0 \leq p<1$
%\item $M_1(a,b)=K_1(a,b)$
%\item $\mathrm{M}_p(a,b)<K_p(a,b)$ if $p>1$. 
%\end{enumerate}
%\end{thm}

% \newpage

% Abbreviated version of the title for the running head:

% ``Generalized elliptic functions and its applications to
% the $p$-Laplacian''
% \bigskip\\

% Name and mailing address of the author to whom proofs should be sent:

% Shingo Takeuchi

% shingo@cc.kogakuin.ac.jp

\end{document}